%% file: Kurepa.tex
\documentclass[a4paper,12pt]{article}

\usepackage{amscd}
\usepackage{amsfonts}
\usepackage{amsmath}
\usepackage{amssymb}
\usepackage{amsthm}
\usepackage{color} % \textcolor
\usepackage[T1]{fontenc} % changing encode
\usepackage{here} % [h] for tables
\usepackage{mathrsfs} % \mathscr
\usepackage{txfonts} % colneqq
\usepackage[all]{xy} % xypic
\usepackage{algorithm} % pseudocode
\usepackage{algpseudocode} %pseudocode
\usepackage{diagbox} % \diagbox

\allowdisplaybreaks

\theoremstyle{plain}
\newtheorem{thm}{Theorem}[section]

\newtheorem{prp}[thm]{Proposition}
\newtheorem{crl}[thm]{Corollary}

\theoremstyle{definition}

\newcommand{\vs}[1][0.2]{\vspace{#1in}\noindent\ignorespaces}
\newcommand{\ba}{\begin{array*}}
\newcommand{\ea}{\end{array*}}
\newcommand{\be}{\begin{eqnarray*}}
\newcommand{\ee}{\end{eqnarray*}}
\newcommand{\bi}{\begin{itemize}}
\newcommand{\ei}{\end{itemize}}
\newcommand{\bb}{\vs\begin{itembox}}
\newcommand{\eb}{\end{itembox}}
\newcommand{\bc}{\begin{center}}
\newcommand{\ec}{\end{center}}
\newcommand{\bs}{\vs\begin{screen}}
\newcommand{\es}{\end{screen}}

\def\ens#1{{\mathchoice{\left\{ #1 \right\}}{\{ #1 \}}{\{ #1 \}}{\{ #1 \}}}}
\def\set#1#2{{\mathchoice{\left\{ #1 \ \middle| \ #2 \right\}}{\{ #1 \mid #2 \}}{\{ #1 \mid #2 \}}{\{ #1 \mid #2 \}}}}
\def\r#1{\text{\rm #1}}

\newcommand{\bN}{\mathbb{N}}

\newcommand{\bP}{\mathbb{P}}
\newcommand{\bQ}{\mathbb{Q}}
\newcommand{\bR}{\mathbb{R}}

\newcommand{\bZ}{\mathbb{Z}}

\newcommand{\cA}{\mathscr{A}}

\newcommand{\N}{\bN}
\newcommand{\Q}{\bQ}
\newcommand{\R}{\bR}
\newcommand{\Z}{\bZ}

\newcommand{\Fp}{\mathbb{F}_p}

\algnewcommand\algorithmicbreak{{\bf break}}
\algnewcommand\Break{\algorithmicbreak{}}
\algnewcommand\algorithmiccontinue{{\bf continue}}
\algnewcommand\Continue{\algorithmiccontinue{}}

\title{Parallel Searching Method for Verifying Finiteness of Prime Numbers Presented as Specific Linear Combinations of Factorials}
\author{Tomoki Mihara}
\date{}

\begin{document}

\maketitle
%\address
\input{Abstract}
\tableofcontents
%\fn{}

\input{Introduction}
\input{Convention}
\input{ProblemStatement}
\input{GroupingMethod}
\input{Application}

\input{References}

\end{document}

%% file: Abstract.tex
\begin{abstract}
As a new approach to search a counterexample of variants of Kurepa's conjecture, we investigate a parallel searching method of prime numbers $p$ satisfying
\begin{eqnarray*}
\sum_{k=0}^{p-1} c_k k! \equiv 0 \pmod{p}
\end{eqnarray*}
for a given integer sequence $(c_k)_{k=0}^{\infty}$. As an application, we prove that there are only finitely may prime numbers of the form $\sum_{k=0}^{n} (1+dk)!$ with $(d,n) \in (\mathbb{N}_{\leq 25} \setminus \{0,8,18,20,23,25\}) \times \mathbb{N}$.
\end{abstract}

%% file: Introduction.tex
\section{Introduction}
\label{Introduction}

Let $\bP$ denote the set of prime numbers. \DJ.\ Kurepa defined the left factorial
\be
!n \coloneqq \sum_{k=0}^{n-1} k!
\ee
for $n \in \N$, and proposed a conjecture that the equality
\be
\gcd(!n,n!) = 2
\ee
holds for any $n \in \N_{> 1}$, or equivalently, there is no solution $p \in \bP_{> 2}$ of the congruence relation
\be
!p \equiv 0 \pmod{p},
\ee
in \cite{Kur71}. Although the conjecture is still open, there have ever been many attempts to find a counterexample, i.e.\ a solution of the congruence relation. For example, V.\ Andrejei\'c and M.\ Tatarevic searched in \cite{AT16} a solution approximately $240$ days, and verified that there is no solution below $2^{34}$. There are also studies on solutions of its variants. For example, $\check{\r{Z}}$.\ Mijajlovi\'c searched in \cite{Mij99} a solution $p \in \bP$ of another congruence relation
\be
\sum_{k=1}^{p-1} (-1)^{k-1} k! \equiv 0 \pmod{p}
\ee
and found the least solution $p = 3612703$, in order to show that there are only finitely many prime numbers of the form $\sum_{k=1}^{n} (-1)^{k-1} k!$ with $n \in \N_{> 0}$. It is still open whether there are infinitely many $p \in \bP$ satisfying another congruence relation
\be
\sum_{k=0}^{p-1} (-1)^{k-1} k! \equiv 0 \pmod{p}
\ee
or not, where the reader should be careful that $k$ starts from $0$, and currently only five solutions $p = 2, 5, 13, 37, 463$ are found (cf.\ \cite{SS09} \S 5). We note that if there are infinitely many $p \in \bP$ not satisfying the congruence relation above, i.e.\ satisfying another congruence relation
\be
\sum_{k=0}^{p-1} (-1)^{k-1} k! \not\equiv 0 \pmod{p},
\ee
then its image in the ring
\be
\cA \coloneqq \prod_{p \in \bP} \Fp \bigg/ \bigoplus_{p \in \bP} \Fp
\ee
of integers modulo infinitely large prime numbers is not $0$. In number theory, the ring $\cA$ plays a role of the base ring in the study of finite multiple zeta values, which are finite analogues of multiple zeta values introduced by D.\ Zagier. Starting from \cite{KZ}, there have been various studies on specific elements of $\cA$ by number theorists. The image of the value above coincides with the image $e_{\cA}$ of $(\sum_{k=0}^{p-1} 1/k! + p \Z)_{p \in \bP}$ by Wilson's theorem (cf.\ \cite{SS09} Lemma 5.2), and $e_{\cA}$ is studied as a finite analogue of Napier's constant $e$ (cf.\ \cite{MMY26} Theorem 1.1). In this sense, study of distribution of solutions of the congruence relation above is closely related to the value of $e_{\cA}$, which is currently not known to be transcendental over $\Q$, irrational, or even non-zero.

\vs
We consider a generalisation of these problems. Let $(c_k)_{k=0}^{\infty}$ be a sequence of integers (typically taking values in $\ens{-1,0,1}$). Is there a solution $p \in \bP$ of the following congruence relation?
\be
\sum_{k=0}^{p-1} c_k k! \equiv 0 \pmod{p}
\ee
If the answer is yes, then there are only finitely many prime numbers of the form $\sum_{k=0}^{n} c_k k!$ with $n \in \N_{> 0}$, because the least solution $p$ divides $\sum_{k=0}^{n} c_k k!$ for any $n \in \N_{\geq p-1}$ (cf.\ Proposition \ref{finiteness}). Therefore searching a solution for the congruence relation is within the realm of the study of infiniteness of prime numbers of the given form.

\vs
For $n \in \N_{> 0}$, the brute-forth searching of a solution $p$ not greater than $n$ using modular arithmetic modulo $p$ requires $\Theta(n \pi(n) (t_{\r{m}}(n) + t_{\r{r}}(n)))$ time complexity and $\Theta(n s_{\r{m}}(n) + s_{\r{r}}(n))$ space complexity in cases with no solution, where $\pi(n)$ denotes the number of prime numbers not greater than $n$, $t_{\r{m}}(n)$ (resp.\ $s_{\r{m}}(n)$) denotes the worst case time (resp.\ space) complexity of $x + y$, $x - y$, $x \times y$ with $(x,y) \in \N_{\leq n}^2$, and $t_{\r{r}}(n)$ (resp.\ $s_{\r{r}}(n)$) denotes the worst case time (resp.\ space) complexity of $\lfloor x / y \rfloor$, $x \bmod y$ with $(x,y) \in \N_{\leq n^2} \times (\N_{\leq n} \setminus \ens{0})$ with respect to fixed implementations. The most standard way to effectively search a solution among preceding studies is concurrent/parallel computation of the brute-forth searching (cf.\ \cite{Mij90}, \cite{Mij99}, \cite{AT16}), but the approach restricts programming languages to be ones with concurrent/parallel feature.

\vs
In our study, we investigate another parallel searching method based on grouping of prime numbers, which does not restrict a programming language. Let $B \in \N_{> 0}$ be a fixed constant, typically chosen as $10^4$, indicating the group size. For $n \in \N_{> 0}$, the grouping method requires $\Theta((n \log \log n) t_{\r{m}}(n) + \pi(n)((1 + \frac{n}{B}) t_{\r{m}}(n^B) + \frac{1}{B} t_{\r{r}}(\max \ens{n,B}) + \frac{n}{B^2} t_{\r{r}}(n^B)))$ time complexity and $\Theta(n s_{\r{m}}(n) + s_{\r{m}}(n^B) + s_{\r{r}}(n^B))$ space complexity in cases with no solution. In particular, this method heavily depends on the implementation of big integer arithmetic.

\vs
Since $t_{\r{m}}(n^B)$ and $t_{\r{r}}(n^B)$ asymptotically dominate $B t_{\r{m}}(n)$ and $B t_{\r{r}}(n)$ respectively in standard implementations, the grouping method might look less effective than the brute-forth method. However, as far as we experimented with $n \leq 3 \times 10^7$ and $B \in \ens{10^3,10^4}$ using Python, the asymptotic comparison failed, i.e.\ the grouping method performed much better than the brute-forth method. In the case where constant factors in computational complexity, e.g.\ function calls, cannot be ignored, the divergence of $t_{\r{m}}(n^B)$ and $t_{\r{r}}(n^B)$ relative to $B t_{\r{m}}(n)$ and $B t_{\r{r}}(n)$ does not occur. Therefore, the high efficiency of the grouping method is not surprising. As a application of the grouping method, we obtain the following:

\begin{thm}
\label{main}
There are only finitely many prime numbers of the form
\be
\sum_{k=0}^{n} (kd+1)!
\ee
with $(d,n) \in (\N_{\leq 25} \setminus \ens{0,8,18,20,23,25}) \times \N$.
\end{thm}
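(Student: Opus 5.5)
For each fixed $d$ in the stated range, I will write $\sum_{k=0}^{n} (kd+1)!$ as a partial sum $\sum_{m=0}^{N} c_m m!$. Here $c_m = 1$ if $m \equiv 1 \pmod{d}$ (for $d=1$, all $m \geq 1$), and $c_m = 0$ otherwise. I will then reduce the claim to exhibiting a single prime $p_d$ with
\be
\sum_{m=0}^{p_d-1} c_m m! \equiv 0 \pmod{p_d}.
\ee
By Proposition \ref{finiteness}, such a $p_d$ divides every partial sum $S_d(n) \coloneqq \sum_{k=0}^{n}(kd+1)!$ once its largest index satisfies $nd+1 \geq p_d - 1$. This works because every additional term $m!$ with $m \geq p_d$ is divisible by $p_d$.

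Consequently, for $n$ large enough $S_d(n)$ can only be prime if it equals $p_d$ itself. Since $S_d(n)$ is strictly increasing in $n$, this happens for at most one $n$. So for each $d$ only finitely many $n$ give primes, and a finite union over the 19 admissible values of $d$ is finite.

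The $d=0$ case is excluded because the form is then $n+1$, which takes infinitely many prime values. One subtlety to check is that the indexing matches Proposition \ref{finiteness}. The top index $nd+1$ may jump past $p_d-1$ rather than landing on it, but Proposition \ref{finiteness} only requires the last index to be at least $p_d-1$, and terms with index $m \geq p_d$ vanish mod $p_d$. So the partial sum up to $p_d - 1$ is what matters, and the intermediate zero coefficients are harmless.

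The main obstacle is the computational step: actually finding $p_d$ for each $d \in \{1,\dots,25\} \setminus \{8,18,20,23,25\}$. I will run the grouping method of \S\ref{GroupingMethod} on the sequence $(c_m)$. The idea is to scan primes in blocks of size $B$ and compute the running partial sums $\sum c_m m!$ modulo the product of the primes in the block, reducing at the end to test each prime. I will stop at the first prime where the residue is $0$. I will record the resulting least solutions $p_d$ in a table in \S\ref{Application}, so that each can be independently checked by a direct $O(p_d)$ computation modulo $p_d$.

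The excluded values $8,18,20,23,25$ are presumably those for which the search found no solution up to the computed bound, so no claim is made for them. For small $d$, solutions are likely tiny. For example, for $d=1$ the sum $\sum_{m=1}^{p-1} m!$ is $!p - 1$, and a small prime should divide it, which will be found immediately.
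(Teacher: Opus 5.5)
Your proposal is correct and is essentially the paper's proof: you encode the sum via $c_m = 1$ for $m \equiv 1 \pmod{d}$ and $c_m = 0$ otherwise, apply Proposition \ref{finiteness}, and take the primes $p_d$ from the grouping-method search recorded in Table \ref{searching result}. Two minor remarks: there are $20$ admissible values of $d$, not $19$, and your choice $c_0 = 0$ for $d = 1$ is the right one, since the paper's literal condition $k \equiv 1 \pmod{1}$ would also include $0!$, whereas its tabulated value $p = 3$ matches your version ($1! + 2! = 3$).
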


We briefly summarise contents of this paper. In \S \ref{Convention}, we explain convention in this paper. In \S \ref{Problem Statement}, we explain the congruence relation in question. In \S \ref{Grouping Method}, we explain the grouping method and implementations. In \S \ref{Application}, we exhibits the results of the grouping method applied to specific problems, and prove Theorem \ref{main}.

%% file: Convention.tex
\section{Convention}
\label{Convention}

We denote by $\N$ the set of non-negative integers, and by $\bP$ the set of prime numbers.

\vs
For sets $X$ and $Y$, we denote by $X^Y$ the set of maps $Y \to X$. For $X \subset \R$ and $x \in X$, we set $X_{< x} \coloneqq \set{x' \in X}{x' < x}$ and $X_{\leq x} \coloneqq \set{x' \in X}{x' \leq x}$. We note that every $d \in \N$ is identical to $\N_{< d}$, and hence for a set $X$, $X^d$ formally means $X^{\N_{< d}}$, which is naturally identified with the set of $d$-tuples in $X$. For a set $I$, we denote by $\# I$ its cardinality.

\vs
For $n_{> 0} \in \N$, $\pi(n)$ denotes the number of prime numbers not greater than $n$, $t_{\r{m}}(n)$ (resp.\ $s_{\r{m}}(n)$) denotes the worst case time (resp.\ space) complexity of $x + y$, $x - y$, $x \times y$ with $(x,y) \in \N_{\leq n}^2$, and $t_{\r{r}}(n)$ (resp.\ $s_{\r{r}}(n)$) denotes the worst case time (resp.\ space) complexity of $\lfloor x / y \rfloor$ and $x \bmod y$ with $(x,y) \in \N_{\leq n^2} \times (\N_{\leq n} \setminus \ens{0})$ with respect to fixed implementations.

%% file: ProblemStatement.tex
\section{Problem Statement}
\label{Problem Statement}

In this section, we explain the problem statement of our study. Let $(c_k)_{k=0}^{\infty} \in \Z^{\N}$. We consider the following two simple questions:
\bi
\item[(1)] Are there infinitely many prime numbers of the form
\be
\sum_{k=0}^{n} c_k k!
\ee
with $n \in \N$?
\item[(2)] Is there a solution $p$ of the congruence relation
\be
\sum_{k=0}^{p-1} c_k k! \equiv 0 \pmod{p}
\ee
on $p \in \bP$?
\ei
The motivation of (1) comes from one of main topics in number theory studying sequences generating infinitely many prime numbers as an extension of Euclid's theorem on infiniteness of prime numbers. The motivation of (2) comes from (1) by the following:

\begin{prp}
\label{finiteness}
If there exists some solution $p \in \bP$ of the congruence relation in (2), there are at most 
\be
\# \left( \set{\sum_{k=0}^{n} c_k k!}{n \in \N_{< p}} \cup \ens{p} \right)
\ee
prime numbers of the form in (1). In particular, there are only finitely many prime numbers of the form in (1).
\end{prp}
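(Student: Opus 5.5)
Write $S_n \coloneqq \sum_{k=0}^{n} c_k k!$ for $n \in \N$, and fix a solution $p \in \bP$ of the congruence relation in (2), so that $p \mid S_{p-1}$. The plan is to show that $S_n \equiv 0 \pmod{p}$ for every $n \in \N_{\geq p-1}$. Any prime of the form in (1) is then forced into the displayed finite set.

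First, for $n \geq p-1$ we decompose
\be
S_n = S_{p-1} + \sum_{k=p}^{n} c_k k!.
\ee
Every $k \geq p$ satisfies $p \mid k!$, because $p$ occurs as a factor of $k!$. The tail sum is therefore divisible by $p$. Together with $p \mid S_{p-1}$ this gives $p \mid S_n$ for all $n \in \N_{\geq p-1}$.

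Next, let $q \in \bP$ be of the form in (1), say $q = S_n$. If $n < p$, then $q$ belongs to $\set{S_n}{n \in \N_{< p}}$. Otherwise $n \geq p \geq p-1$, so $p \mid q$ by the first step. Since $q$ is prime and $p > 1$, this forces $q = p$. Hence every such prime lies in $\set{S_n}{n \in \N_{< p}} \cup \ens{p}$. The number of such primes is therefore at most the cardinality of this set, which is finite since it has at most $p+1$ elements. This proves the stated bound and hence the finiteness claim.

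None of the steps is a real obstacle. The only point that needs care is the sign of the values. A prime $q = S_n$ is positive and divisible by $p$, so it equals $p$. Values $S_n$ that are negative or zero are not prime, so they do not need to be counted.
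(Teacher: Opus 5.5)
Your proof is correct and is essentially the paper's argument: the paper also writes $S_n$ as $S_{p-1}$ plus the tail $p!\sum_{k=p}^{n} c_k \prod_{j=p+1}^{k} j$, concludes $p \mid S_n$ for $n \geq p$, and deduces that a prime value with $n \geq p$ must equal $p$. You also note explicitly that prime values are positive, so a value such as $S_n = -p$ is not counted; the paper leaves this point implicit.
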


The proof is essentially in \cite{Mij90}, although we are considering a generalised setting.

\begin{proof}
For any $n \in \N_{\geq p}$, we have
\be
\sum_{k=0}^{n} c_k k! \equiv \sum_{k=0}^{p-1} c_k k! + p! \sum_{k=p}^{n} c_k \prod_{j=p+1}^{k} j \equiv \sum_{k=0}^{p-1} c_k k! \equiv 0 \pmod{p},
\ee
and hence the left hand side is a prime number if and only if it coincides with $p$. This implies that the number
\be
\sum_{k=0}^{n} c_k k!
\ee
with $n \in \N$ is a prime number only if $n < p$ or it coincides with $p$.
\end{proof}

A standard technique to estimate the distribution of solutions of the congruence relation in (2) is a heuristic method under an intuitive assumption of randomness of the value
\be
\sum_{k=0}^{p-1} c_k k! \bmod p \in \N_{< p}
\ee
for varying $p \in \bP$ (cf.\ \cite{Mij99} p.\ 407, \cite{SS09} p.\ 6, \cite{AT16} \S 4), which yields expectation that there are infinitely many solutions. Of course, the randomness fails in trivial cases where $c_k = 0$ for all but finitely many $k \in \N$. Further, the randomness fails when we choose $(c_k)_{k=0}^{\infty}$ even non-trivially but artificially:

\begin{prp}
\label{emptiness}
There exists some computable sequence $(c_k)_{k=0}^{\infty} \in \Z^{\N}$ such that $c_0 = 0$, $c_k \in \ens{-1,1}$ for any $k \in \N_{> 0}$, and there is no solution of the congruence relation in (2).
\end{prp}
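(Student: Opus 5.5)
The plan is to build $(c_k)_{k=0}^{\infty}$ by recursion on $k$. At every index of the form $k = p-1$ with $p \in \bP$, the last term of the sum in (2) for that prime is still undetermined, and we use this freedom to keep the sum nonzero modulo $p$. The key observation is Wilson's theorem, $(p-1)! \equiv -1 \pmod{p}$. With it, the final summand $c_{p-1}(p-1)!$ contributes exactly $-c_{p-1}$ modulo $p$, whatever its sign.

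Concretely, set $c_0 \coloneqq 0$ and define $c_k$ for $k \geq 1$ in increasing order of $k$. If $k+1 \notin \bP$, put $c_k \coloneqq 1$. If $k+1 = p \in \bP$, then $c_1,\dots,c_{p-2}$ are already defined, and we set
\be
T_p \coloneqq \sum_{k=0}^{p-2} c_k k!.
\ee
We put $c_{p-1} \coloneqq 1$ if $T_p - 1 \not\equiv 0 \pmod{p}$, and $c_{p-1} \coloneqq -1$ otherwise. The map $p \mapsto p-1$ is injective, so each index is assigned exactly once and the recursion is well defined. The sequence is computable: primality is decidable, and $T_p \bmod p$ is a finite computation from previously computed values. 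By construction $c_0 = 0$ and $c_k \in \ens{-1,1}$ for every $k \in \N_{> 0}$.

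It remains to check that no $p \in \bP$ is a solution of (2).
\bi
\item[(a)] For $p = 2$, the sum is $c_0 \cdot 0! + c_1 \cdot 1! = c_1 = \pm 1$, which is not divisible by $2$.
\item[(b)] For odd $p$, Wilson's theorem gives
\be
\sum_{k=0}^{p-1} c_k k! \equiv T_p - c_{p-1} \pmod{p}.
\ee
If $c_{p-1} = 1$, the right-hand side is $T_p - 1 \not\equiv 0 \pmod{p}$ by the choice of $c_{p-1}$. If $c_{p-1} = -1$, then $T_p \equiv 1 \pmod{p}$, so the right-hand side is $T_p + 1 \equiv 2 \not\equiv 0 \pmod{p}$, since $p$ is odd.
\ei

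The only delicate point is ensuring that at least one choice of sign avoids $0$. The two candidate residues $T_p \mp 1$ differ by $2$, so they cannot both vanish modulo an odd prime. The prime $p = 2$ is handled separately in (a), using $c_0 = 0$ and $c_1 = \pm 1$.
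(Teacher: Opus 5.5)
Your proof is correct and follows essentially the same route as the paper. In both, the sign of $c_{p-1}$ is chosen recursively, using that the two candidate sums differ by $2(p-1)!$, which is a unit modulo an odd prime $p$. The paper applies its test $\sum_{j<k} c_j j! + k! \not\equiv 0 \pmod{k+1}$ at every index $k>0$, so it needs neither a primality test nor Wilson's theorem, only $p \nmid (p-1)!$. Your restriction of the choice to prime indices, and your use of Wilson's theorem to make the nonzero residue explicitly $2$, are harmless variations.
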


\begin{proof}
We define $(c_k)_{k=0}^{\infty}$ in the following recursive way:
\be
c_k \coloneqq
\left\{
\begin{array}{ll}
0 & (k = 0) \\
-1 & \left( k > 0 \land \left( \sum_{j=0}^{k-1} c_j j! \right) + k! \equiv 0 \pmod{k+1} \right) \\
1 & \left( k > 0 \land \left( \sum_{j=0}^{k-1} c_j j! \right) + k! \not\equiv 0 \pmod{k+1} \right)
\end{array}
\right.
\ee
We have $c_0 = 0$ by definition, $c_1 = 1$ by $c_0 0! + 1! = 1 \not\equiv 0 \pmod{2}$, and $c_2 = -1$ by $c_0 0! + c_1 1! + 2! = 3 \equiv 0 \pmod{3}$. Let $p \in \bP$. If $p = 2$, then we have
\be
\sum_{k=0}^{p-1} c_k k! = c_0 0! + c_1 1! + c_2 2! = -1 \not\equiv 0 \pmod{p}.
\ee
Suppose $p > 2$. First, if
\be
\left( \sum_{k=0}^{p-2} c_k k! \right) + (p-1)! \equiv 0 \pmod{p},
\ee
then we have $c_{p-1} = -1$, and hence
\be
\sum_{k=0}^{p-1} c_k k! = \left( \sum_{k=0}^{p-2} c_k k! \right) - (p-1)! \equiv -2 \times (p-1)! \not\equiv 0 \pmod{p}.
\ee
Next, if
\be
\left( \sum_{k=0}^{p-2} c_k k! \right) + (p-1)! \not\equiv 0 \pmod{p},
\ee
then we have $c_{p-1} = 1$, and hence
\be
\sum_{k=0}^{p-1} c_k k! = \left( \sum_{k=0}^{p-2} c_k k! \right) + (p-1)! \not\equiv 0 \pmod{p}.
\ee
Therefore, we have
\be
\sum_{k=0}^{p-1} c_k k! \not\equiv 0 \pmod{p}.
\ee
in both cases.
\end{proof}

The first $100$ entries $(c_k)_{k=0}^{99}$ of the example in the proof are
\be
0, -1, 1, -1, -1, 1, -1, 1, \ldots, 1,
\ee
where the omitted $91$ entries are all $1$, and the heuristic estimation implies the expectation that $-1$ very sparsely appears infinite times, although both of whether $1$ appears infinite times and whether $-1$ appears infinite times are open.

%% file: GroupingMethod.tex
\section{Grouping Method}
\label{Grouping Method}

In this section, we explain the grouping method to effectively search a solution $p \in \bP$ of the congruence relation in \S \ref{Problem Statement}. Before that, we optimise constant factors in computational complexity. There are many ways to optimise constant factors of the brute-force method to search a solution. For example, reducing arithmetic operations by interpretation to a simple recursive formula is considered in \cite{Mij90} and \cite{AT16} for specific settings. The same strategy works in our generalised setting:

\begin{prp}
\label{recursive 1}
Let $n \in \N_{> 0}$. We define a sequence $(a_i)_{i=0}^{n-1} \in \Z^n$ by the following recursive relation:
\be
a_i =
\left\{
\begin{array}{ll}
c_{n-1} & (i = 0) \\
(n-i) a_{i-1} + c_{n-i-1} & (i > 0)
\end{array}
\right.
\ee
Then the equality
\be
\sum_{k=0}^{n-1} c_k k! = a_{n-1}
\ee
holds.
\end{prp}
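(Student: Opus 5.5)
We prove a closed form for every $a_i$ by induction on $i$, and then specialise to $i = n-1$. The statement is a Horner-type rewriting: the sum $\sum_{k=0}^{n-1} c_k k!$ can be nested as
\be
c_0 + 1\bigl(c_1 + 2\bigl(c_2 + \cdots + (n-1)c_{n-1}\cdots\bigr)\bigr).
\ee
The recursion computes this nesting from the innermost term outwards.

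The claimed closed form is that for every $i \in \N_{< n}$,
\be
a_i = \sum_{k=n-i-1}^{n-1} c_k \frac{k!}{(n-i-1)!}.
\ee
For $i = 0$ the right hand side reduces to $c_{n-1} (n-1)!/(n-1)! = c_{n-1}$, which agrees with the definition of $a_0$.

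For the inductive step, let $0 < i < n$ and assume the formula for $a_{i-1}$, namely
\be
a_{i-1} = \sum_{k=n-i}^{n-1} c_k \frac{k!}{(n-i)!}.
\ee
Multiplying by $n-i$ and using $(n-i)/(n-i)! = 1/(n-i-1)!$ (valid since $n-i \geq 1$), we obtain
\be
(n-i) a_{i-1} = \sum_{k=n-i}^{n-1} c_k \frac{k!}{(n-i-1)!}.
\ee
Adding $c_{n-i-1}$ contributes exactly the missing term $k = n-i-1$, because $c_{n-i-1} (n-i-1)!/(n-i-1)! = c_{n-i-1}$. This gives the closed form for $a_i$.

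Setting $i = n-1$ gives $(n-i-1)! = 0! = 1$, so $a_{n-1} = \sum_{k=0}^{n-1} c_k k!$, which is the claimed equality.

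There is no real obstacle here. The only point needing care is the index bookkeeping: the quotients $k!/(n-i-1)!$ are integers because $k \geq n-i-1$ throughout the range of summation, and the edge cases $n = 1$ (so only $i = 0$ occurs) and $i = n-1$ must still fit the general formula, which they do.
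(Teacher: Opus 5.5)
Your proof is correct and is the same argument as the paper's. The paper proves by induction that $a_i = \sum_{k=n-1-i}^{n-1} c_k \prod_{j=n-i}^{k} j$, which is exactly your closed form since $\prod_{j=n-i}^{k} j = k!/(n-i-1)!$, and you correctly specialise at $i = n-1$ (the paper's text says ``$i = n$'', which is a slip).
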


\begin{proof}
By induction, we have
\be
a_i = \sum_{k=n-1-i}^{n-1} c_k \prod_{j=n-i}^{k} j
\ee
for any $i \in \N_{< n}$. The assertion immediately follows from the equality applied to $i = n$.
\end{proof}

\begin{crl}
\label{recursive 2}
Let $(n,d) \in \N \times \N_{> 0}$. We define a sequence $(a_i)_{i=0}^{n-1} \in \Z^n$ by the following recursive relation:
\be
a_i =
\left\{
\begin{array}{ll}
c_n & (i = 0) \\
\left( \prod_{j=2+d(n-i)}^{1+d(n+1-i)} j \right) a_{i-1} + c_{n-i} & (i > 0)
\end{array}
\right.
\ee
Then the equality
\be
\sum_{k=0}^{n} c_k (1+dk)! = a_n
\ee
holds.
\end{crl}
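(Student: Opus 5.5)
The plan is a direct induction on $i$, modelled on the proof of Proposition \ref{recursive 1}, instead of trying to reduce the statement to that proposition by substitution. Since $d > 0$, the product $\prod_{j=2+d(n-i)}^{1+d(n+1-i)} j$ has exactly $d$ factors. It is therefore the ratio of factorials
\be
\prod_{j=2+d(n-i)}^{1+d(n+1-i)} j = \frac{(1+d(n+1-i))!}{(1+d(n-i))!}.
\ee
This identity is what makes the recursion telescope, so I will record it first. I also note that the sequence is really indexed by $i \in \N_{\leq n}$, since $a_n$ appears in the conclusion; the case $n = 0$ is trivial because then $a_0 = c_0 = c_0 \cdot 1!$.

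The invariant I would prove, for every $i \in \N_{\leq n}$, is
\be
a_i = \sum_{k=n-i}^{n} c_k \frac{(1+dk)!}{(1+d(n-i))!}.
\ee
For $i = 0$ the right hand side is $c_n (1+dn)!/(1+dn)! = c_n = a_0$. For the inductive step, assume the formula for $i-1$, whose denominator is $(1+d(n+1-i))!$. Multiplying by the product, rewritten as the ratio above, cancels that denominator and replaces it by $(1+d(n-i))!$. This yields the terms $k = n-i+1, \ldots, n$ of the claimed sum. The added summand $c_{n-i}$ is exactly the missing term $k = n-i$, because $(1+d(n-i))!/(1+d(n-i))! = 1$. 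Since every denominator divides the corresponding numerator (as $k \geq n-i$ and $d > 0$), all quantities stay in $\Z$. Formally one can also read each quotient as the integer product $\prod_{j=2+d(n-i)}^{1+dk} j$, exactly as in Proposition \ref{recursive 1}.

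Finally I would specialise to $i = n$. The denominator becomes $(1+0)! = 1$, so $a_n = \sum_{k=0}^{n} c_k (1+dk)!$, which is the assertion. I do not expect a genuine obstacle. The only place requiring care is the index bookkeeping: checking that the product's bounds $2+d(n-i)$ and $1+d(n+1-i)$ match consecutive factorial arguments, and handling the conventions for empty products (the innermost term $k = n-i$ has an empty product equal to $1$).
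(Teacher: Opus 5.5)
Your proof is correct, but it takes a different route from the paper. The paper does not redo the induction. It deduces the corollary from Proposition \ref{recursive 1} by applying that proposition to an auxiliary sparse sequence $c'$. Viewed that way, the corollary's recursion is Proposition \ref{recursive 1}'s recursion with each block of $d$ steps, of which $d-1$ add a zero coefficient, collapsed into one multiplication by a product of $d$ consecutive integers. That is exactly the batching exploited in Algorithm \ref{grouping 2}.

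What the reduction buys is reuse, and an explanation of where the recursion comes from. However, the paper leaves the identification of the two recursions implicit. Its auxiliary sequence as written, $c'_k = c_{k/d}$ for $d \mid k$, is also off by one: it would produce $\sum_m c_m (dm)!$ rather than $\sum_m c_m (1+dm)!$. To make the reduction work one has to:
\begin{itemize}
\item take $c'_k = c_{(k-1)/d}$ for $k \geq 1$ with $k \equiv 1 \pmod d$, and $c'_k = 0$ otherwise;
\item apply the proposition with $dn+2$ in place of its $n$;
\item check that $a_i = a'_{di}$ for $i \leq n$, since the multipliers over the block $d(i-1)+1, \ldots, di$ are exactly $1+d(n+1-i), \ldots, 2+d(n-i)$;
\item note that the final step $a'_{dn+1} = 1 \cdot a'_{dn} + c'_0 = a'_{dn}$ is harmless.
\end{itemize}

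Your direct induction avoids all of this bookkeeping. You use the invariant $a_i = \sum_{k=n-i}^{n} c_k (1+dk)!/(1+d(n-i))!$ and verify explicitly that the product bounds are consecutive factorial arguments, so the stated bounds are checked exactly. The cost is repeating, rather than reusing, the induction behind Proposition \ref{recursive 1}. Your remark that the sequence must be indexed by $i \in \N_{\leq n}$ for $a_n$ to make sense is also correct.
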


\begin{proof}
The assertion follows from Proposition \ref{recursive 1} applied to the sequence $(c'_k)_{k=0}^{\infty}$ given as
\be
c'_k \coloneqq
\left\{
\begin{array}{ll}
c_{k/d} & (k \equiv 0 \pmod{d}) \\
0 & (k \not\equiv 0 \pmod{d})
\end{array}
\right..
\ee
\end{proof}

Let $n \in \N$ be a fixed upper bound of prime numbers which we search. Here are pseudocodes for the brute-forth methods to search a solution not greater than $n$ using the recursive relations:

\begin{figure}[H]
\begin{algorithm}[H]
\caption{Brute-forth method to search a solution $p \in \bP_{\leq n}$ of $\sum_{k=0}^{p-1} c_k k! \equiv 0 \pmod{p}$ using Proposition \ref{recursive 1}}
\label{brute-forth 1}
\begin{algorithmic}[1]
\Function {BruteForth}{$n$}
	\State $(p_i)_{i=0}^{\pi(n)-1} \gets$ the enumeration of prime numbers not greater than $n$
	\For {$i \in \N_{< \pi(n)}$ in the usual order}
		\State $a \gets c_{p_i - 1}$
		\For {$k \in \N_{< p_i} \setminus \ens{0}$ in the reverse order}
			\State $a \gets (k a + c_{k - 1}) \bmod p_i$
		\EndFor
		\If {$a = 0$}
			\State \Return $p_i$
		\EndIf
	\EndFor
	\State \Return $-1$
\EndFunction
\end{algorithmic}
\end{algorithm}
\end{figure}

\Call{BruteForth}{$n$} runs in
\be
\Theta \left( n \pi(n) (t_{\r{m}}(n) + t_{\r{r}}(n)) \right)
\ee
time complexity and
\be
\Theta(n s_{\r{m}}(n) + s_{\r{r}}(n))
\ee
space complexity in cases with no solution not greater than $n \in \N_{> 0}$.

\begin{figure}[H]
\begin{algorithm}[H]
\caption{Brute-forth method to search a solution $p \in \bP_{\leq n}$ of $\sum_{k=0}^{\lceil (p_i-1)/d \rceil - 1} c_k (1+dk)! \equiv 0 \pmod{p}$ using corollary \ref{recursive 2}}
\label{brute-forth 2}
\begin{algorithmic}[1]
\Function {SkipBruteForth}{$n,d$}
	\State $(p_i)_{i=0}^{\pi(n)-1} \gets$ the enumeration of prime numbers not greater than $n$
	\For {$i \in \N_{< \pi(n)}$ in the usual order}
		\State $B' \gets \lceil (p_i-1)/d \rceil$
		\State $a \gets c_{B' - 1}$
		\For {$k \in \N_{< B'} \setminus \ens{0}$ in the reverse order}
			\State $t \gets 1$
			\For {$k' \in \N \cap (1+d(k-1),1+dk]$ in the usual order}
				\State $t \gets t \times k' \bmod p_i$
			\EndFor
			\State $a \gets (ta + c_{k - 1}) \bmod p_i$ \Comment{$t = (1+dk)!/(1+d(k-1))! \bmod {p_i}$}
		\EndFor
		\If {$a = 0$}
			\State \Return $p_i$
		\EndIf
	\EndFor
	\State \Return $-1$
\EndFunction
\end{algorithmic}
\end{algorithm}
\end{figure}

\Call{SkipBruteForth}{$n,d$} runs in
\be
\Theta \left( \pi(n) (t_{\r{r}}(d) + n (t_{\r{m}}(n) + t_{\r{r}}(n))) \right)
\ee
time complexity and
\be
\Theta(n s_{\r{m}}(n) + s_{\r{r}}(\max \ens{n,d})))
\ee
space complexity in cases with no solution not greater than $n \in \N_{> 0}$.

\vs
Because of the motivation explained in \S \ref{Introduction}, we do not consider constant-factor optimisations related to parallel computation. For example, Montgomery reduction is a standard method to effectively compute $x \bmod y$ for varying $x \in \Z$ and fixed $y \in \N_{> 0}$ in a way compatible with parallel computation, but does not perform well in non-parallel computation. Instead, we focus on reducing arithmetic operations by grouping prime numbers.

\vs
Let $B \in \N_{> 0}$ be a fixed constant for the size of groups of prime numbers. The grouping method is based on the following parallel criterion for $B$ prime numbers:

\begin{prp}
\label{chinese remainder theorem}
For any strictly increasing sequence $(p_i)_{i=0}^{B-1} \in \bP^B$, the following are equivalent:
\bi
\item[(1)] There exists some $i \in \N_{< B}$ such that the congruence relation
\be
\sum_{k=0}^{p_i-1} c_k k! \equiv 0 \pmod{p_i}
\ee
holds.
\item[(2)] The congruence relation
\be
\sum_{k=0}^{p_{B-1}-1} c_k k! \equiv 0 \pmod{q}
\ee
holds, where $q$ denotes $\prod_{i=0}^{B-1} p_i$.
\ei
\end{prp}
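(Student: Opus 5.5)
The plan is to reduce the congruence modulo $q$ to the individual congruences modulo each $p_i$, using two ingredients: a truncation step like the one in the proof of Proposition \ref{finiteness}, and the Chinese remainder theorem. For $m \in \N$ put $S_m \coloneqq \sum_{k=0}^{m-1} c_k k!$, so that (1) concerns the values $S_{p_i} \bmod p_i$ and (2) concerns $S_{p_{B-1}} \bmod q$.

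\textbf{Step 1 (truncation).} Since $(p_i)$ is strictly increasing, $p_i \leq p_{B-1}$ for every $i \in \N_{< B}$. Every term $c_k k!$ with $p_i \leq k \leq p_{B-1}-1$ contains the factor $p_i$, exactly as in the proof of Proposition \ref{finiteness}. Hence
\be
S_{p_{B-1}} \equiv S_{p_i} \pmod{p_i}
\ee
for every $i \in \N_{< B}$, and the congruence of (1) for the index $i$ holds if and only if $p_i \mid S_{p_{B-1}}$. This step is routine.

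\textbf{Step 2 (CRT).} The $p_i$ are pairwise distinct primes, so they are pairwise coprime and $q = \prod_{i<B} p_i$. The Chinese remainder theorem gives $\Z/q\Z \cong \prod_{i<B} \Z/p_i\Z$. Therefore the congruence of (2) is equivalent to $S_{p_{B-1}} \equiv 0 \pmod{p_i}$ simultaneously for all $i \in \N_{< B}$. By Step 1, this is the same as the congruence of (1) holding for every index $i$.

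\textbf{Step 3 (matching with (1)).} The direction (2)$\Rightarrow$(1) is then immediate: if $q \mid S_{p_{B-1}}$, then in particular $p_0 \mid S_{p_{B-1}}$, and Step 1 gives the congruence of (1) with $i = 0$. For (1)$\Rightarrow$(2), I would take an index $i$ with $p_i \mid S_{p_i}$ and use Step 1 to get $p_i \mid S_{p_{B-1}}$. The remaining task is to upgrade this to divisibility by all of $q$.

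\textbf{Main obstacle.} This upgrade is where I expect the real difficulty. The CRT decomposition of Step 2 converts divisibility by $q$ into divisibility by \emph{every} $p_i$, whereas (1) supplies only \emph{one} index. What I would try first is to extract the needed information directly from the residue $r \coloneqq S_{p_{B-1}} \bmod q$. By Step 2, the components of $r$ under $\Z/q\Z \cong \prod_{i<B} \Z/p_i\Z$ are exactly the values $S_{p_i} \bmod p_i$. So the whole argument comes down to checking how the vanishing of a single component of $r$ relates to the condition that $r$ vanishes modulo $q$, as (2) requires.

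I would verify this matching carefully, and in particular test small cases with $B = 2$, before finalising the write-up. This is the point on which the exact form of the equivalence, as stated, depends. Steps 1 and 2 are robust in any case, and they are what the grouping method actually uses: a single reduction of $S_{p_{B-1}}$ modulo $q$ simultaneously gives the residues of all $B$ primes of the group.
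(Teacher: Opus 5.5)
You prove (2)$\Rightarrow$(1) correctly. Your Steps 1--2 also correctly show that (2) is equivalent to the congruence in (1) holding for \emph{every} $i \in \N_{<B}$.

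The gap is (1)$\Rightarrow$(2), which you flag as the main obstacle and leave open. No argument can close it: for $B \geq 2$ this implication is false for general $(c_k)$, and the $B=2$ test you propose shows this immediately. Take Kurepa's case $c_k = 1$ for all $k$ and $(p_0,p_1) = (2,3)$. Then $S_2 = 0!+1! = 2 \equiv 0 \pmod{2}$, so (1) holds with $i=0$. But $S_3 = 0!+1!+2! = 4 \not\equiv 0 \pmod{6}$, so (2) fails. For any $B \geq 2$, the first $B$ primes give a counterexample with the same sequence, since $S_{p_{B-1}} \equiv S_3 = 4 \pmod{3}$. In your residue language: the vanishing of one CRT component of $r$ says nothing about the others, while $r = 0$ requires all of them to vanish. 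The complete answer here is a disproof of the stated equivalence, not a proof, and you should have committed to that conclusion.

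The paper's own proof has exactly the same defect. After the truncation congruence (your Step 1), it asserts that the claim ``follows from Chinese remainder theorem'', but CRT only delivers the universal version. What your Steps 1--2 do establish is the corrected equivalence: (1) holds if and only if $\gcd(S_{p_{B-1}}, q) \neq 1$. Equivalently, (1) holds if and only if $p_i \mid (S_{p_{B-1}} \bmod q)$ for some $i \in \N_{<B}$.

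This corrected form is precisely what Algorithm~\ref{grouping 1} uses: one reduction modulo $q$, followed by a separate test $a \bmod p_i = 0$ for each prime in the group. So the grouping method and the computations in \S\ref{Application} are unaffected. Only condition (2) of the proposition is misstated.
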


\begin{proof}
Since $(p_i)_{i=0}^{B-1}$ is strictly increasing, its maximum is $p_{B-1}$ and it is pairwise coprime. Therefore, we have
\be
\sum_{k=0}^{p_{B-1}-1} c_k k! \equiv \sum_{k=0}^{p_i-1} c_k k! \pmod{p_i}
\ee
for any $i \in \N_{< B}$ and hence the assertion follows from Chinese remainder theorem.
\end{proof}

Proposition \ref{chinese remainder theorem} helps us to reduce total amount of arithmetic operations in the repetition of the decision processes (Algorithm \ref{brute-forth 1} line 4--10 and Algorithm \ref{brute-forth 2} line 4--15) with trade-off against the computational complexity of big integer arithmetic. In addition, we reduce residue operations by lazy evaluation of modular arithmetic also by big integer arithmetic. Here are pseudocodes for the grouping methods to search a solution not greater than $n$:

\begin{figure}[H]
\begin{algorithm}[H]
\caption{Grouping method to search a solution $p \in \bP_{\leq n}$ of $\sum_{k=0}^{p-1} c_k k! \equiv 0 \pmod{p}$ using Proposition \ref{recursive 1}}
\label{grouping 1}
\begin{algorithmic}[1]
\Function {Grouping}{$n,B$}
	\State $(p_i)_{i=0}^{\pi(n)-1} \gets$ the enumeration of prime numbers not greater than $n$
	\For {$b_0 \in \N_{< \lceil \pi(n)/B \rceil}$ in the usual order} \Comment{$b_0 < \lceil \pi(n)/B \rceil \Leftrightarrow Bb_0 < \pi(n)$}
		\State $i_0 \gets Bb_0$
		\State $i_1 \gets \min \ens{B(b_0+1),\pi(n)}$
		\State $q \gets 1$
		\For {$i \in \N \cap [i_0,i_1)$ in the usual order}
			\State $q \gets qp_i$
		\EndFor
		\State $B' \gets \lceil (p_{i_1-1}-1)/B \rceil$
		\State $a \gets c_{BB'}$
		\For {$b_1 \in \N_{< B'}$ in the reverse order} \Comment{$b_1 < B' \Leftrightarrow 1+Bb_1 < p_{i_1-1}$}
			\For {$k \in \N \cap [1+Bb_1,1+B(b_1+1))$ in the reverse order}
				\State $a \gets k a + c_{k - 1}$
			\EndFor
			\State $a \gets a \bmod q$
		\EndFor
		\For {$i \in \N \cap [i_0,i_1)$ in the usual order}
			\If {$a \bmod p_i = 0$}
				\State \Return $p_i$
			\EndIf
		\EndFor
	\EndFor
	\State \Return $-1$
\EndFunction
\end{algorithmic}
\end{algorithm}
\end{figure}

\Call{Grouping}{$n,B$} runs in
\be
\Theta \left( (n \log \log n) t_{\r{m}}(n) + \pi(n) \left( \left( 1 + \frac{n}{B} \right) t_{\r{m}} \left( n^B \right) + \frac{1}{B} t_{\r{r}} \left( \max \ens{n,B} \right) + \frac{n}{B^2} t_{\r{r}} \left( n^B \right) \right) \right)
\ee
time complexity and
\be
\Theta \left( n s_{\r{m}}(n) + s_{\r{m}} \left( n^B \right) + s_{\r{r}} \left( n^B \right) \right)
\ee
space complexity in cases with no solution not greater than $n \in \N_{> 0}$.

\begin{figure}[H]
\begin{algorithm}[H]
\caption{Grouping method to search a solution $p \in \bP_{\leq n}$ of $\sum_{k=0}^{\lceil (p-1)/d \rceil - 1} c_k (1+dk)! \equiv 0 \pmod{p}$ using Corollary \ref{recursive 2} under $d < B$}
\label{grouping 2}
\begin{algorithmic}[1]
\Function {SkipGrouping}{$n,d,B$}
	\State $(p_i)_{i=0}^{\pi(n)-1} \gets$ the enumeration of prime numbers not greater than $n$
	\State $B \gets B - (B \bmod d)$
	\For {$b_0 \in \N_{< \lceil \pi(n)/B \rceil}$ in the usual order} \Comment{$b_0 < \lceil \pi(n)/B \rceil \Leftrightarrow Bb_0 < \pi(n)$}
		\State $i_0 \gets Bb_0$
		\State $i_1 \gets \min \ens{B(b_0+1),\pi(n)}$
		\State $q \gets 1$
		\For {$i \in \N \cap [i_0,i_1)$ in the usual order}
			\State $q \gets qp_i$
		\EndFor
		\State $B' \gets \lceil (p_{i_1-1}-1)/B \rceil$
		\State $a \gets c_{(B/d)(B'-1)}$
		\For {$b_1 \in \N_{< B'} \setminus \ens{0}$ in the reverse order} \Comment{$b_1 < B' \Leftrightarrow 1+Bb_1 < p_{i_1-1}$}
			\For {$k \in \N \cap ((B/d)(b_1-1),(B/d)b_1]$ in the reverse order}
				\State $t \gets 1$
				\For {$k' \in \N \cap (1+d(k-1),1+dk]$ in the usual order}
					\State $t \gets t \times k'$
				\EndFor
				\State $a \gets ta + c_{k-1}$ \Comment{$t = (1+dk)!/(1+d(k-1))!$}
			\EndFor
			\State $a \gets a \bmod q$
		\EndFor
		\For {$i \in \N \cap [i_0,i_1)$ in the usual order}
			\If {$a \bmod p_i = 0$}
				\State \Return $p_i$
			\EndIf
		\EndFor
	\EndFor
	\State \Return $-1$
\EndFunction
\end{algorithmic}
\end{algorithm}
\end{figure}

\Call{SkipGrouping}{$n,d,B$} runs in
\be
\Theta \left( (n \log \log n) t_{\r{m}}(n) + t_{\r{r}}(d) + \pi(n) \left( \left( 1 + \frac{n}{B} \right) t_{\r{m}} \left( n^B \right) + \frac{1}{B} t_{\r{r}} \left( \max \ens{n,B} \right) + \frac{n}{B^2} t_{\r{r}} \left( n^B \right) \right) \right)
\ee
time complexity and
\be
\Theta \left( n s_{\r{m}}(n) + s_{\r{r}}(d) + s_{\r{m}} \left( n^B \right) + s_{\r{r}} \left( n^B \right) \right)
\ee
space complexity in cases with no solution not greater than $n \in \N_{> 0}$.

%% file: Application.tex
\section{Application}
\label{Application}

We searched a solution for the case
\be
c_k =
\left\{
\begin{array}{ll}
0 & (k = 0) \\
(-1)^{k-1} & (k > 0)
\end{array}
\right.,
\ee
i.e.\ a solution $p \in \bP$ of the congruence relation
\be
\sum_{k=1}^{p-1} (-1)^{k-1} k! \equiv 0 \pmod{p},
\ee
by executing \Call{Grouping}{$3 \times 10^7,10^5$}. We note that $\check{\r{Z}}$.\ Mijajlovi\'c spent about $130$ hours to find the least solution $p = 3612703$ using the brute-force method in \cite{Mij99}, but we spent only $20$ minutes with a standard laptop PC. We verified that there is no other solution below $3 \times 10^7$, and it is still open that there is a solution greater than the least solution $3612703$.

\vs
We also searched a solution for the case
\be
c_k =
\left\{
\begin{array}{ll}
1 & (k \equiv 1 \pmod{d}) \\
0 & (k \not\equiv 1 \pmod{d}) \\
\end{array}
\right.,
\ee
i.e.\ a solution $p \in \bP$ of the congruence relation
\be
\sum_{k=0}^{\lceil (p-1)/d \rceil - 1} (1+kd)! \equiv 0 \pmod{p},
\ee
for $d \in \N_{\leq 25} \setminus \ens{0}$ by executing \Call{SkipGrouping}{$2 \times 10^7,d,10^5$}. Here is the table of the obtained least solution $p$:

\begin{table}[H]
\caption{The least solution $p$ for $d$\label{searching result}}
\begin{center}
\begin{minipage}{0.15\textwidth}
\centering
\begin{tabular}{|r|c|}
\hline
$d$ & $p$ \\
\hline \hline
1 & 3 \\
2 & 107 \\
3 & 5 \\
4 & 6323 \\
5 & 7 \\
\hline
\end{tabular}
\end{minipage}
\begin{minipage}{0.15\textwidth}
\centering
\begin{tabular}{|r|c|}
\hline
$d$ & $p$ \\
\hline \hline
6 & 61 \\
7 & 19 \\
8 & ? \\
9 & 11 \\
10 & 4093 \\
\hline
\end{tabular}
\end{minipage}
\begin{minipage}{0.15\textwidth}
\centering
\begin{tabular}{|r|c|}
\hline
$d$ & $p$ \\
\hline \hline
11 & 13 \\
12 & 257 \\
13 & 23 \\
14 & 2549 \\
15 & 17 \\
\hline
\end{tabular}
\end{minipage}
\begin{minipage}{0.20\textwidth}
\centering
\begin{tabular}{|r|c|}
\hline
$d$ & $p$ \\
\hline \hline
16 & 32261 \\
17 & 19 \\
18 & ? \\
19 & 3977273 \\
20 & ? \\
\hline
\end{tabular}
\end{minipage}
\begin{minipage}{0.15\textwidth}
\centering
\begin{tabular}{|r|c|}
\hline
$d$ & $p$ \\
\hline \hline
21 & 23 \\
22 & 61 \\
23 & ? \\
24 & 1087 \\
25 & ? \\
\hline
\end{tabular}
\end{minipage}
\end{center}
\end{table}

We verified that there is no solution $p$ for $d \in \ens{8,18,20,23,25}$ below $2 \times 10^7$, and it is open whether there is a solution for $d$ or not. We finish the paper by proving Theorem \ref{main}:

\begin{proof}[Proof of Theorem \ref{main}]
The assertion immediately follows from Proposition \ref{finiteness} applied to the case
\be
c_k =
\left\{
\begin{array}{ll}
1 & (k \equiv 1 \pmod{d}) \\
0 & (k \not\equiv 1 \pmod{d}) \\
\end{array}
\right.
\ee
with $d \in \N_{\leq 25} \setminus \ens{0,8,18,20,23,25}$ by Table \ref{searching result}.
\end{proof}

%% file: References.tex
% \newpage
\vspace{0.3in}
\addcontentsline{toc}{section}{Acknowledgements}
\noindent {\Large \bf Acknowledgements}
\vspace{0.2in}

\noindent
I thank all people who helped me to learn mathematics and programming. I also thank my family.